\newtheorem{thm}{Theorem}[section]
\newtheorem{prop}[thm]{Proposition}
\newtheorem{lem}[thm]{Lemma}
\newtheorem{cor}[thm]{Corollary}
\newtheorem{conj}[thm]{Conjecture}
\numberwithin{equation}{section}
\theoremstyle{definition}
\newtheorem{definition}[thm]{Definition}
\newtheorem{remark}[thm]{Remark}
\newtheorem{notation}[thm]{Notation}
\newcommand{\IC}{\mathbb{C}}
\newcommand{\IP}{\mathbb{P}}
\newcommand{\IZ}{\mathbb{Z}}
\renewcommand{\to}{\xymatrix@1@=15pt{\ar[r]&}}
\renewcommand{\rightarrow}{\xymatrix@1@=15pt{\ar[r]&}}
\renewcommand{\mapsto}{\xymatrix@1@=15pt{\ar@{|->}[r]&}}
\renewcommand{\twoheadrightarrow}{\xymatrix@1@=15pt{\ar@{->>}[r]&}}
\renewcommand{\hookrightarrow}{\xymatrix@1@=15pt{\ar@{^(->}[r]&}}
\newcommand{\congpf}{\xymatrix@1@=15pt{\ar[r]^-\sim&}}
\begin{document}

\newboolean{xlabels} 
\newcommand{\xlabel}[1]{ 
                        \label{#1} 
                        \ifthenelse{\boolean{xlabels}} 
                                   {\marginpar[\hfill{\tiny #1}]{{\tiny #1}}} 
                                   {} 
                       } 
\setboolean{xlabels}{false} 

\title[The Jordan-H\"older property]{On the Jordan-H\"older property for geometric derived categories}

\author[B\"ohning]{Christian B\"ohning$^1$}
\address{Christian B\"ohning, Fachbereich Mathematik der Universit\"at Hamburg\\
Bundesstra\ss e 55\\
20146 Hamburg, Germany}
\email{christian.boehning@math.uni-hamburg.de}

\author[Bothmer]{Hans-Christian Graf von Bothmer}
\address{Hans-Christian Graf von Bothmer, Fachbereich Mathematik der Universit\"at Hamburg\\
Bundesstra\ss e 55\\
20146 Hamburg, Germany}
\email{hans.christian.v.bothmer@uni-hamburg.de}

\author[Sosna]{Pawel Sosna$^2$}
\address{Pawel Sosna, Fachbereich Mathematik der Universit\"at Hamburg\\
Bundesstra\ss e 55\\
20146 Hamburg, Germany}
\email{pawel.sosna@math.uni-hamburg.de}

\thanks{$^1$ Supported by Heisenberg-Stipendium BO 3699/1-1 of the DFG (German Research Foundation)}
\thanks{$^2$ Supported by the RTG 1670 of the  DFG (German Research Foundation)}

\begin{abstract}
We prove that the semiorthogonal decompositions of the derived category of the classical Godeaux surface $X$ do not satisfy the Jordan-H\"older property. More precisely, there are two maximal exceptional sequences in this category, one of length $11$, the other of length $9$. Assuming the Noetherian property for semiorthogonal decompositions, one can define, following Kuznetsov, the Clemens-Griffiths component $\mathrm{CG}( \mathfrak{D})$ for each fixed maximal decomposition $\mathfrak{D}$. We then show that $\mathrm{D}^b (X)$ has two different maximal decompositions for which the Clemens-Griffiths components differ. Moreover, we produce examples of rational fourfolds whose derived categories also violate the Jordan-H\"older property. 
\end{abstract}

\maketitle

\section{Introduction}\xlabel{sIntroduction}

Over the past couple of decades the use of derived categories in algebraic geometry has become increasingly popular and successful. 
One advantage of this approach is that classical varieties are considered within a larger framework, namely \emph{noncommutative spaces} by which we mean triangulated $\mathbb{C}$-linear categories with a DG-enhancement. Roughly, whereas in the classical theory one considers things patched from commutative rings, here one passes to noncommutative $\mathbb{C}$-algebras, and more generally DG-algebras, or better, because everything should be ``derived Morita invariant", to the derived categories of modules over these DG-algebras. There are more maps between classical varieties when one considers them as noncommutative spaces (``Fourier-Mukai transforms"). One may also imagine  that, within the larger category of noncommutative spaces, unrelated commutative objects can possibly be connected by noncommutative deformations and this could shed more light on classical moduli problems.

The natural decompositions of a noncommutative space into simpler pieces are the semiorthogonal ones (semiorthogonality with respect to the $\mathrm{RHom}^{\bullet}$-pairing). See Section \ref{sSemiorthogonal} for details of the definition and background.

Semiorthogonal decompositions enjoy some good properties, for example, there is an interesting action of the braid group on their pieces and they behave well under birational modifications and other natural constructions like projective bundles. There are also properties that could be classified as a bit pathological, for example, it has been recognized recently \cite{BBKS12}, \cite{GorOrl} that certain pieces in semiorthogonal decompositions cannot be detected by any natural additive invariants (i.e., they evaluate to zero on them indiscriminately). These pieces were called phantoms because of this.

Some other foundational questions about semiorthogonal decompositions have remained open, however, in particular, whether the derived category of a variety (always smooth and projective here) has finite length with respect to these decompositions (the Noetherian property) and, more strongly, whether maximal such decompositions are always essentially unique (i.e.\ up to reordering of the pieces and equivalences of categories).  The latter is called the \emph{Jordan-H\"older property}. There are few references available in the literature, but it was discussed in \cite{Kuz09a}, \cite{KuzECM} and phrased as a question at the end of the article \cite{Kaw05}.

It is known that this fails for general triangulated categories by an example of Bondal, see \cite{KuzECM}: this is given as the derived category of representations of a certain quiver with relations. However, it is not of geometric origin, i.e.\ not equivalent to any $\mathrm{D}^b (Z)$ where $Z$ is a variety. Hence it was hoped that some sort of geometricity may save the Jordan-H\"older property in the context of derived categories of varieties. This would have very nice consequences: in \cite{Kuz10}, \cite{Kuz09a}, \cite{KuzECM} an approach to nonrationality of generic cubic fourfolds is sketched which is partly based on the Jordan-H\"older property and could be made into a complete proof if Jordan-H\"older were true.

In the present article we show that the Jordan-H\"older property fails also in the geometric set-up. Let $Y = \{ x_1^5 + x_2^5 + x_3^5 + x_4^5 =0 \}\subset \IP^3$ be the Fermat quintic with the $\IZ/5$-action given by $x_i \mapsto \xi^i x_i$ ($\xi$ a primitive fifth root of unity) and let $X= Y/(\IZ/5)$ be the so-called classical Godeaux surface, a surface of general type with $K^2=1$, $p_g=q=0$ and $\pi_1 = \IZ/5$.\smallskip

\noindent
{\textbf{Theorem \ref{tMain}}} \textit{The bounded derived category $\mathrm{D}^b(X)$ of coherent sheaves on the classical Godeaux surface $X$ does not satisfy the Jordan-H\"older property, namely it has two maximal exceptional sequences of different lengths: one of length $11$, which was already exhibited in \cite{BBS12}, and one of length $9$, which cannot be extended further.}
\smallskip

This failure of the Jordan-H\"older property is not confined to varieties of general type: in Corollary \ref{cFourfolds} we show that its failure on the Godeaux surface entails the failure on some rational fourfolds as well. Moreover, if one assumes the Noetherian property for semiorthogonal decompositions, the two exceptional sequences on $X$ give rise to two different maximal semiorthogonal decompositions whose respective Clemens-Griffiths components, see Definition \ref{dClemensGriffiths}, differ. 

The paper is organized as follows: Section \ref{sSemiorthogonal} assembles and recalls basic material concerning semiorthogonal decompositions and the Jordan-H\"older property. Section \ref{sLattice} explains the idea behind the construction of the length $9$ sequence which cannot be extended, and Section \ref{sCounter} carries out the details of this. 

The results of the present paper suggest that some new ideas will probably be needed before one may use derived category approaches to make progress on rationality problems, to mention just one major application. Possibly one has to  get a grip on all possible different semiorthogonal decompositions in a given situation and understand how the different ones are related. 

\textbf{Acknowledgment.} We would like to thank Sven Porst for letting us borrow his laptop after the one of the second author crashed. All computations for this article were done on his machine.

\section{Semiorthogonal decompositions and the Jordan-H\"older property}\xlabel{sSemiorthogonal}

We recall some basic notions, in particular, the Jordan-H\"older property and the definition of the Clemens-Griffiths component given in \cite{KuzECM}, \cite{Kuz09a}. 

We work over $k= \IC$. Let $\mathcal{T}$ denote a $k$-linear triangulated category. 

\begin{definition}\xlabel{dSemiorth}
A subcategory $\mathcal{S} \subset \mathcal{T}$ is called \emph{admissible} if the inclusion functor has a left and right adjoint. A sequence of admissible subcategories $(\mathcal{S}_1, \dots , \mathcal{S}_n)$ of $\mathcal{T}$ is called a \emph{semiorthogonal decomposition} of $\mathcal{T}$ if $\mathrm{Hom} (S_j, S_i) = 0$ for all objects $S_j \in \mathcal{S}_j$, $S_i \in \mathcal{S}_i$ with $j > i$, and, moreover, the subcategories $\mathcal{S}_i$ generate $\mathcal{T}$ in the sense that the smallest triangulated subcategory containing all of them is equivalent to $\mathcal{T}$. One writes
\[
\mathcal{T} = \langle \mathcal{S}_1, \dots , \mathcal{S}_n \rangle .
\]
\end{definition}

\begin{definition}\xlabel{dNoetherian}
The category $\mathcal{T}$ satisfies the \emph{Noetherian property} if every increasing sequence $\mathcal{S}_1 \subset \mathcal{S}_2\subset \dots $ of admissible subcategories becomes stationary. 
\end{definition}

Passing to the sequence of (right)orthogonals $^\perp\mathcal{S}_1 \supset ^\perp\mathcal{S}_2 \supset \dots $, one sees that the Noetherian property is equivalent to the Artinian property for $\mathcal{T}$, that is, every descending sequence of admissible subcategories becomes stationary. 

\begin{definition}\xlabel{dJordanHoelder}
A semiorthogonal decomposition $\mathcal{T} = \langle \mathcal{S}_1, \dots , \mathcal{S}_n \rangle$
is called \emph{maximal} if each $\mathcal{S}_i$ does not have any nontrivial semiorthogonal decompositions. The category $\mathcal{T}$ satisfies the \emph{Jordan-H\"older property} if, firstly, $\mathcal{T}$ satisfies the Noetherian property, and, secondly, if 
\[
\mathcal{T} = \langle \mathcal{S}_1, \dots , \mathcal{S}_n \rangle
\]
and
\[
\mathcal{T} = \langle \mathcal{U}_1, \dots , \mathcal{U}_m \rangle
\]
are two maximal semiorthogonal decompositions, then $m=n$ and there exists a bijection $\sigma\colon \{ 1, \dots , n\} \to \{ 1, \dots , m \}$ such that $\mathcal{S}_i$ is equivalent to $\mathcal{U}_{\sigma (i)}$.  
\end{definition}

\begin{definition}\xlabel{dEssentialDimension}
The \emph{essential dimension} $\mathrm{ess.dim} (\mathcal{T})$ of $\mathcal{T}$ is the minimum integer $n$ such that there exists a fully faithful embedding $\mathcal{T} \hookrightarrow \mathrm{D}^b (X)$ with admissible image, where $X$ is a smooth projective variety of dimension $n$. We set $\mathrm{ess.dim}(\mathcal{T} ) = \infty$ if no such integer exists. 
\end{definition}

\begin{definition}\xlabel{dClemensGriffiths}
Let $\mathfrak{D}$ be a maximal semiorthogonal decomposition of $\mathcal{T}= \mathrm{D}^b (X)$ where $X$ is a smooth projective variety of dimension $n$:
\[
\mathcal{T} = \langle \mathcal{S}_1, \dots , \mathcal{S}_n \rangle .
\]
The \emph{Clemens-Griffiths component} $\mathrm{CG}(\mathfrak{D})$ of $\mathfrak{D}$ is defined as the smallest triangulated subcategory of $\mathcal{T}$ which contains all those $\mathcal{S}_i$ with $\mathrm{ess.dim}(\mathcal{S}_i) \ge n-1$. 
\end{definition}

We will construct examples of varieties $X$ where $\mathrm{D}^b(X)$ does not satisfy the Jordan-H\"older property. We can take for $X$ the classical Godeaux surface. In fact, we will construct maximal exceptional sequences of different lengths. Assuming the Noetherian property for $\mathrm{D}^b(X)$ this implies that there are two different maximal decompositions whose Clemens-Griffiths components are not equivalent. Recall the

\begin{definition}\xlabel{dExceptional}
An object $E \in \mathrm{D}^b (X)$ is called \emph{exceptional} if $\mathrm{RHom}^{\bullet}(E, E) \simeq \IC$. An \emph{exceptional sequence} is a sequence of exceptional objects $(E_1, \dots ,E_m)$ with $\mathrm{RHom}^{\bullet}(E_j, E_i)=0$ for $j>i$. The sequence is called \emph{unextendable} if there does not exist an exceptional object $F$ in $\mathrm{D}^b(X)$ such that $(E_1, \dots , E_m, F)$ is an exceptional sequence. \end{definition}

If an exceptional sequence is unextendable, there is no exceptional object $F'$ such that $(F', E_1, \dots , E_m)$ is an exceptional sequence either: otherwise one can mutate $F'$ to the right.

Every exceptional sequence generates an admissible subcategory, and an admissible subcategory is generated by an exceptional object if and only if it is equivalent to the derived category of finite dimensional $k$-vector spaces.

Now we briefly discuss the impact of this on derived category approaches to rationality of varieties: in \cite{Kuz10}, Kuznetsov has shown that for a smooth cubic fourfold $V$ there is a semiorthogonal decomposition
\[
\mathrm{D}^b (V) = \langle \mathcal{A}_Y, \mathcal{O}_V, \mathcal{O}_V (1), \mathcal{O}_V (2) \rangle
\]
and, for many classes of rational cubic fourfolds, $\mathcal{A}_Y$ is equivalent to the derived category of a K3 surface, and, moreover, there are cases where $\mathcal{A}_Y$ is a derived category of a ``noncommutative" K3 surface (a $2$-Calabi-Yau category) of essential dimension bigger than $2$. In these cases one does not expect $V$ to be rational.

Assuming the Jordan-H\"older property for derived categories of smooth projective varieties, this would imply nonrationality of the generic cubic fourfold since one could then speak of \emph{the} Clemens-Griffiths component of a variety and this would be birationally invariant (the latter would follow immediately from the structure of derived categories of blow-ups, Beilinson's theorem for $\mathrm{D}^b (\mathbb{P}^n)$ and the factorization theorem for birational maps into a sequence of blow-ups and blow-downs in smooth centers). 

Our results suggest that this approach needs some substantial modification. However, it is very beautiful, and one may hope that either the Clemens-Griffiths component is, by some miracle or deeper principle, a well-defined notion for fourfolds of Kodaira dimension $-\infty$, or one may control \emph{all} semiorthogonal decompositions of $\mathrm{D}^b(V)$ (or $\mathrm{D}^b(\mathbb{P}^4)$) suitably, as well as their behaviour under elementary birational transformations.

\section{Idea of the construction}\xlabel{sLattice}

For an arbitrary smooth projective variety $Z$, we have a nonsymmetric bilinear pairing
\[
\chi (\cdot , \cdot ) : \mathrm{K}_0 (Z) /(\mathrm{tors}) \times \mathrm{K}_0 (Z) / (\mathrm{tors}) \to \IZ
\]
given by the Euler form. 

\begin{definition}
A sequence of objects $(e_1, \dots , e_n)$ is called \emph{numerically semiorthonormal} if $\chi (e_i, e_i ) =1$ and $\chi (e_j, e_i ) = 0$ whenever $j > i$. It is called a \emph{numerically semiorthonormal basis} if moreover $e_1, \dots , e_n$ is a basis of $\mathrm{K}_0 (Z) /(\mathrm{tors})$. This is equivalent to $n = \mathrm{rk} (\mathrm{K}_0 (Z) /(\mathrm{tors}))$. 
\end{definition}

Clearly, the classes in $\mathrm{K}_0(Z)/(\mathrm{tors})$ of the objects in an exceptional sequence $(E_1, \dots , E_n)$ in $\mathrm{D}^b (Z)$ form a numerically semiorthonormal sequence.

The idea of the construction of our unextendable exceptional sequence is based on the existence of an isomorphism of lattices $\mathrm{K}_0 (X)/(\mathrm{tors }) \simeq \mathrm{K}_0 (S)$ where $S$ is a del Pezzo surface of degree $1$, i.e.\ the blow-up of $\mathbb{P}^2$ in eight points, and the fact that there are line bundles $\mathcal{L}$ on $S$ with $\chi (\mathcal{L} ) = 0$, but $\mathrm{R}\Gamma^{\bullet } (\mathcal{L}) \neq 0$, which under the above isomorphism correspond to classes in $\mathrm{K}_0 (X) /(\mathrm{tors})$ which can be realized by acyclic line bundles $\overline{\mathcal{L}}$ on $X$.  Here acyclic means $\mathrm{R}\Gamma^{\bullet} (\overline{\mathcal{L}})=0$. That is, the idea is to exploit the \emph{difference of the notion of effectivity} on $S$ and $X$.

There seems to be the folklore result that all numerically semiorthonormal bases of $\mathrm{K}_0 (S)$ form one braid group orbit up to tensoring by classes of line bundles and sign changes of basis vectors. However, whereas this is proven for complete exceptional sequences on $S$, we could not find a proof for the lattice theoretic result in the literature. However, we used it as a heuristic philosophy to find the counterexample. This is formalized in the following 

\begin{lem}\xlabel{lHeuristic}
If there are line bundles $\mathcal{L}$ on $S$ and $\overline{\mathcal{L}}$ on $X$ with the above properties and if numerically semiorthonormal bases in $\mathrm{K}_0 (S)$ form one braid group orbit up to tensoring by classes of line bundles and sign changes of basis vectors, then the Jordan-H\"older property does not hold for $\mathrm{D}^b (X)$. 
\end{lem}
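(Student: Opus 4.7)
The strategy is to exhibit a maximal exceptional sequence in $\mathrm{D}^b(X)$ of length strictly less than $11 = \mathrm{rk}(\mathrm{K}_0(X)/(\mathrm{tors}))$; combined with the length-$11$ sequence of \cite{BBS12}, this immediately violates the Jordan-H\"older property. The key geometric input is the acyclic line bundle $\overline{\mathcal{L}}$: because $\mathrm{R}\Gamma^{\bullet}(\overline{\mathcal{L}}) = 0$, the pair $(\mathcal{O}_X, \overline{\mathcal{L}})$ is exceptional in $\mathrm{D}^b(X)$, whereas on $S$ the corresponding pair $(\mathcal{O}_S, \mathcal{L})$ is numerically semiorthonormal but not geometrically exceptional (since $\mathrm{R}\Gamma^{\bullet}(\mathcal{L}) \neq 0$).

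First I would extend $(\mathcal{O}_X, \overline{\mathcal{L}})$ to a maximal exceptional sequence $\sigma = (\mathcal{O}_X, \overline{\mathcal{L}}, E_3, \ldots, E_k)$ in $\mathrm{D}^b(X)$, using suitable exceptional line bundles on $X$ whose classes in $\mathrm{K}_0(X)/(\mathrm{tors}) \simeq \mathrm{K}_0(S)$ complete the pair $([\mathcal{O}_S], [\mathcal{L}])$ to a numerically semiorthonormal sequence in $\mathrm{K}_0(S)$. The explicit choice of these line bundles, along with the verification of the required $\mathrm{Hom}$- and $\mathrm{Ext}$-vanishings on $X$, is precisely the content of Section \ref{sCounter}, which will produce $k = 9$.

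The heart of the proof is then to show $k < 11$. I would argue by contradiction: if $k = 11$, the classes of $\sigma$ form a numerically semiorthonormal basis $\mathcal{B}$ of $\mathrm{K}_0(X)/(\mathrm{tors}) \simeq \mathrm{K}_0(S)$ beginning with $([\mathcal{O}_S], [\mathcal{L}], \ldots)$. By the braid orbit hypothesis, $\mathcal{B}$ is obtained from the standard basis of $\mathrm{K}_0(S)$ --- itself coming from a full exceptional sequence of line bundles on the del Pezzo surface $S$ --- by a finite composition of braid mutations, line bundle twists, and sign changes. Reversing these operations converts $\mathcal{B}$ into the standard basis through a chain of numerically semiorthonormal bases which, by the initial distinguished position of $[\mathcal{L}]$, would force $\mathcal{L}$ (up to twist) to participate as an actual term in a genuine exceptional sequence of line bundles on $S$. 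But $\mathrm{R}\Gamma^{\bullet}(\mathcal{L}) \neq 0$ prevents $\mathcal{L}$ from being semiorthogonal to $\mathcal{O}_S$, giving the desired contradiction.

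The main obstacle will be making this last step rigorous. The braid orbit hypothesis is purely lattice-theoretic, so at the level of $\mathrm{K}_0(S)$ one can freely permute numerically semiorthonormal bases; the real leverage must come from insisting that $\mathcal{B}$ be realized by genuine exceptional objects on $X$ and then exploiting the ``effectivity mismatch'' between $\mathcal{L}$ on $S$ (where $\mathrm{R}\Gamma^{\bullet} \neq 0$) and $\overline{\mathcal{L}}$ on $X$ (where $\mathrm{R}\Gamma^{\bullet} = 0$) to rule out any extension of $\sigma$ to length $11$. The detailed cohomological calculations underpinning this obstruction --- in particular the concrete choices of line bundles in the length-$9$ sequence and the numerical analysis of their classes in $\mathrm{K}_0(S)$ --- are carried out in Section \ref{sCounter}.
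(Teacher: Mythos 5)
Your overall strategy is the paper's: show that the pair built from the acyclic bundle cannot sit inside a length-$11$ exceptional sequence and play this off against the length-$11$ sequence of \cite{BBS12}. But the step you yourself flag as ``the main obstacle'' is precisely where the paper's proof has its content, and your sketch of it does not work as written. The braid group does not preserve ``the initial distinguished position'' of a basis vector, so you cannot track $[\mathcal{L}]$ through a chain of mutations back to the standard basis; and the leverage does \emph{not} come from realizability by exceptional objects on $X$ (a general type surface, where exceptional objects are not classified), but from rigidity on $S$. The paper's argument needs two specific facts you do not supply: (a) braid mutations, line bundle twists and sign changes (= shifts) act on \emph{actual} exceptional sequences in $\mathrm{D}^b(S)$, so the transitivity hypothesis implies that any numerically semiorthonormal basis extending $\bigl([\mathcal{O}_S],[\mathcal{L}^{-1}]\bigr)$ is the class vector of a genuine full exceptional sequence $(E_1,\dots,E_{11})$ with $[E_1]=[\mathcal{O}_S]$ and $[E_2]=[\mathcal{L}^{-1}]$; and (b) exceptional objects on a del Pezzo surface are shifts of sheaves that are either locally free or supported on exceptional curves, hence are determined by their classes in $\mathrm{K}_0(S)$, which forces $E_1\cong\mathcal{O}_S$ and $E_2\cong\mathcal{L}^{-1}$ up to shift and contradicts $\mathrm{R}\Gamma^{\bullet}(\mathcal{L})\neq 0$. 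Without (a) and (b) the ``effectivity mismatch'' yields no contradiction, because a purely lattice-theoretic hypothesis cannot see cohomology.

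Two further corrections. First, the exceptional pair is $(\mathcal{O}_X,\overline{\mathcal{L}}^{-1})$, not $(\mathcal{O}_X,\overline{\mathcal{L}})$: semiorthogonality requires $\mathrm{RHom}^{\bullet}(\overline{\mathcal{L}}^{-1},\mathcal{O}_X)\cong\mathrm{R}\Gamma^{\bullet}(\overline{\mathcal{L}})=0$, which is exactly the given acyclicity, whereas acyclicity of $\overline{\mathcal{L}}$ says nothing about $\mathrm{R}\Gamma^{\bullet}(\overline{\mathcal{L}}^{-1})$; the same dualization slip occurs in your claim that $(\mathcal{O}_S,\mathcal{L})$ fails to be exceptional. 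Second, the length-$9$ sequence of Section \ref{sCounter} is \emph{not} an extension of this pair --- the paper introduces it precisely to circumvent the fact that Conjecture \ref{cExceptionalPair} was only checked probabilistically --- and the present lemma needs no input from Section \ref{sCounter}: granting the braid-orbit hypothesis, non-extendability of the pair to length $11$ together with the existence of the length-$11$ sequence already defeats the Jordan-H\"older property.
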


\begin{proof}
In the above set-up we have an exceptional pair $(\mathcal{O}_X, \overline{\mathcal{L}}^{-1})$ in $\mathrm{D}^b (X)$. We claim that it cannot be extended to a numerically semiorthonormal basis even in $\mathrm{K}_0 (X)/(\mathrm{tors})$. In fact, otherwise, $(\mathcal{O}_S, \mathcal{L}^{-1})$ can also be extended to a semiorthonormal basis in $\mathrm{K}_0 (S)$. By assumption and because there are full exceptional sequences in $\mathrm{D}^b (S)$, we get that there is a full exceptional sequence $(E_1, \dots , E_{11})$ in $\mathrm{D}^b (S)$ such that $[E_1] = [ \mathcal{O}_S]$ and $[E_2] = [\mathcal{L}^{-1}]$ in $\mathrm{K}_0 (S)$. But exceptional objects in $\mathrm{D}^b (S)$ are pure, i.e.\ shifts of sheaves, and these sheaves are either locally free or supported on exceptional curves. Hence we must have that $E_1$ and $E_2$ are actually isomorphic to $\mathcal{O}_S$ and $\mathcal{L}^{-1}$ (up to a shift). But then $(\mathcal{O}_S, \mathcal{L}^{-1})$ would be an exceptional pair, a contradiction because $\mathcal{L}$ is not acyclic.

Hence $(\mathcal{O}_X, \overline{\mathcal{L}}^{-1})$ is not extendable to an exceptional sequence of length $11$ in $\mathrm{D}^b (X)$. Since there is an exceptional sequence of length $11$ in $\mathrm{D}^b (X)$ by \cite{BBS12}, the Jordan-H\"older property does not hold.
\end{proof}

Thus we first find candidates for $\mathcal{L}$ and $\overline{\mathcal{L}}$ above. We need to introduce some notation for this:

\begin{notation}\xlabel{nDelPezzoGodeaux}
On the del Pezzo surface $S$ we write $K_S$ for the canonical class, $h$ for the pull-back of the hyperplane class, $e_1, \dots , e_8$ for the eight exceptional curves of the blow-up $S \to \mathbb{P}^2$. Then $\mathrm{Pic} (S)$ is generated by $h, e_1, \dots , e_8$ and the intersection matrix with respect to this basis is  
$\mathrm{diag} (1, -1, \dots , -1)$. One has $K_S = - 3h + \sum_i e_i$. As a lattice $\mathrm{Pic} (S) \simeq \mathbf{1} \perp (-E_8)$ where $\mathbf{1}$ is generated by $K_S$. 

One also has $\mathrm{Pic}(X)/(\mathrm{tors}) \simeq \mathrm{Pic}(S)$ as lattices for $X$ the Godeaux surface. On $X$ there are precisely $50$ (smooth) elliptic curves of (canonical) degree $1$ which were made explicit in \cite{BBS12}. We denote them by $E_{i,j}^{\pm}$, $i, j \in \mathbb{Z}/5$, as in that paper. Moreover, we denote the canonical class by $K_X$ and abbreviate $E_i^{\pm} := E^{\pm}_{i,0}$. There is a pencil of genus $2$ curves on $X$ with five reducible fibers consisting of the two elliptic curves $E_i^+$ and $E_i^-$ meeting transversely.
\end{notation}

We can take $\mathcal{L}= \mathcal{O}_S (2e_1)$. Then $\chi (\mathcal{L})=0$, but certainly $\mathcal{L}$ has a section, hence is not acyclic. In fact, we may write $\mathcal{O}_S(e_1) = -K_S + R$, whence $R$ is a root, i.e.\ $R^2= -2$, $K_S.R = 0$. Then there is an essential difference between $S$ and $X$:

\begin{prop}\xlabel{pEffectivity}
The bundle \[ \mathcal{L} = \mathcal{O}_S (2 (-K_S +R))\] is never acyclic, for any root $R$ in $\mathrm{Pic}(S)$. However, on the Godeaux surface $X$, the line bundle
\[
\overline{\mathcal{L}} = \mathcal{O}_X (2 (-K_X +R'))
\]
can be both acyclic or not acyclic depending on the choice of root $R'$. It is acyclic, for example, if $R' = E_{1}^+ - E_{2}^+$, and not acyclic if $R' = K_X - E_1^+$. 
\end{prop}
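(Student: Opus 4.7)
The proof splits into three claims: the del Pezzo statement, the non-acyclic case on $X$ with $R' = K_X - E_1^+$, and the acyclic case on $X$ with $R' = E_1^+ - E_2^+$. For the del Pezzo, one computes $(-K_S + R)^2 = K_S^2 - 2K_S \cdot R + R^2 = -1$ and $(-K_S + R) \cdot K_S = -1$, so $-K_S + R$ has the numerical type of a $(-1)$-curve. On a del Pezzo of degree $1$ every such class is represented by an irreducible exceptional curve---the $240$ exceptional curves are in bijection with the roots of $E_8$ via $R \mapsto -K_S + R$---so $-K_S + R$, and a fortiori $2(-K_S + R)$, is effective; hence $h^0(\mathcal{L}) > 0$ and $\mathcal{L}$ is not acyclic.

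For the non-acyclic case on $X$, $\overline{\mathcal{L}}$ simplifies to $\mathcal{O}_X(-2 E_1^+)$, and by Serre duality it suffices to produce a nonzero section of $K_X + 2 E_1^+$. The restriction sequence
\[
0 \longrightarrow \mathcal{O}_X(K_X) \longrightarrow \mathcal{O}_X(K_X + E_1^+) \longrightarrow \mathcal{O}_{E_1^+}(K_{E_1^+}) \longrightarrow 0,
\]
in which adjunction identifies the last term and $K_{E_1^+} = \mathcal{O}_{E_1^+}$ because $E_1^+$ is elliptic, combined with $H^0(K_X) = H^1(K_X) = 0$ from $p_g = q = 0$, yields $h^0(K_X + E_1^+) = 1$. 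Stepping up once more, the analogous sequence for $K_X + 2 E_1^+$ has restriction of degree $K_X \cdot E_1^+ + 2(E_1^+)^2 = -1$ on the elliptic curve $E_1^+$ and hence no $H^0$, so $h^0(K_X + 2E_1^+) \geq h^0(K_X + E_1^+) = 1$. By Serre duality $h^2(\overline{\mathcal{L}}) \neq 0$.

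For the acyclic case, Riemann--Roch yields $\chi(\overline{\mathcal{L}}) = 0$, and $h^0(\overline{\mathcal{L}}) = 0$ is immediate from $K_X \cdot \overline{\mathcal{L}} = -2 < 0$ with $K_X$ ample. The crux is the vanishing $h^2(\overline{\mathcal{L}}) = h^0(3K_X - 2E_1^+ + 2E_2^+) = 0$; once this is established, $h^1 = 0$ follows from $\chi = 0$. This is the main obstacle: numerical restriction of $3K_X - 2E_1^+ + 2E_2^+$ to each of $E_1^{\pm}, E_2^{\pm}$ yields only non-negative intersection numbers, so effectivity cannot be excluded by intersection theory alone. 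My plan is to lift to the \'etale cover $\pi : Y \longrightarrow X$ by the Fermat quintic, use $H^0(X, L) = H^0(Y, \pi^* L)^{\IZ/5}$, and reduce to the vanishing of the $\IZ/5$-invariant part of $H^0(Y, 3H + 2\widetilde{E}_2^+ - 2\widetilde{E}_1^+)$, where $H = K_Y = \mathcal{O}_Y(1)$ and $\widetilde{E}_i^+ = \pi^{-1}(E_i^+)$ is a single connected elliptic quintic curve in $\mathbb{P}^3$ (a disconnected cover would yield five components, each isomorphic to $E_i^+$ and of degree $1$ in $\mathbb{P}^3$, i.e.\ lines, contradicting $g = 1$). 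With explicit equations for the $\widetilde{E}_i^+$ available from \cite{BBS12}, this reduces to a finite-dimensional linear algebra problem on cubic forms in the homogeneous coordinate ring of the Fermat quintic---precisely where the authors' computer algebra enters.
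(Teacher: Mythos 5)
Your treatment of the del Pezzo statement and of the non-acyclic case $R' = K_X - E_1^+$ is correct and essentially identical to the paper's: $-K_S+R$ is an exceptional vector, hence the class of a $(-1)$-curve and therefore effective, and the two restriction sequences give $h^0(K_X+2E_1^+) = h^0(K_X+E_1^+) = 1$, so $h^2(\mathcal{O}_X(-2E_1^+)) \neq 0$. Your reduction of the acyclic case to the single vanishing $h^0(3K_X - 2E_1^+ + 2E_2^+) = 0$ (via $\chi = 0$, ampleness of $K_X$, and Serre duality) also matches the paper.

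The gap is that this last vanishing --- which you rightly identify as the crux --- is never actually established. You only outline a computation, and the outline as stated does not work: sections of $\mathcal{O}_Y(3H + 2\widetilde{E}_2^+ - 2\widetilde{E}_1^+)$ are \emph{not} cubic forms vanishing doubly along $\widetilde{E}_1^+$, because the positive summand $2\widetilde{E}_2^+$ permits poles along $\widetilde{E}_2^+$, and $\widetilde{E}_2^+$ is not cut out by a single form; restricting attention to cubics only bounds the space from below, which is the wrong direction. Before any such linear-algebra check one must first trade $E_2^+$ for a pluricanonical class minus an effective divisor; the paper does this in Remark \ref{rAlternative} via the genus-$2$ pencil relation $E_2^+ \sim 3K_X - L_0^0 - E_2^-$, arriving at invariant sections of $\mathcal{O}(9)$ on the Fermat quintic vanishing on $p^*(2L_0^0 + 2E_1^+ + 2E_2^-)$ --- degree $9$, not $3$. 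More importantly, the paper's main proof is not computational at this point: Lemma \ref{lEffective} shows that $h^0(K_X - R') = 0$ (a fact imported from \cite{BBS12}) forces $h^0(3K_X - 2R') = 0$, by writing $3K_X - 2R' = 7K_X - 2B$ with $B = R' + 2K_X$ effective, observing that $I_{B,X}$ is generated in degrees $\ge 4$, and invoking the Ein--Lazarsfeld--Smith comparison of symbolic and ordinary powers to conclude that $I_{B,X}^{(2)} = I_{B,X}^2$ has no sections in degree $7$. You would need either this argument or a corrected version of the explicit computation to close the proof.
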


\begin{proof}

There are $240$ roots $R$ in the $(-E_8)$-lattice. Now $V= -K_S + R$ is an exceptional vector in the terminology of \cite[Ch.\ 8]{Dolg}: it satisfies
\[
V. K_S = -1, \quad V^2 = -1 \, .
\]
There are $240$ of these on a del Pezzo surface of degree $1$ and they correspond precisely to the $(-1)$-curves on $S$. In particular, they are all effective by \cite[Lem.\ 8.2.22]{Dolg}.

Let us now show the second part of Proposition \ref{pEffectivity}. 
Taking $R' = K_X - E_1^+$ one gets
\[
\overline{\mathcal{L}} = \mathcal{O}_X (-2E_1^+)\, .
\]
However, $h^2 (\overline{\mathcal{L}} ) = h^0 ( K_X + 2E_1^+ )$ and the exact sequence
\[
0 \to \mathcal{O}_X (K_X + E_1^+ ) \to \mathcal{O}_X ( K_X + 2E_1^+) \to \mathcal{O}_{E_1^+} ( K_X + 2E_1^+) \to 0
\]
yields $h^0 (\mathcal{O}_X (K_X + E_1^+ )) = h^0 (\mathcal{O}_X ( K_X + 2E_1^+))$ since $(K_X + 2E_1^+).E_1^+ = -1$. But then the exact sequence
\[
0 \to \mathcal{O}_X (K_X) \to \mathcal{O}_X (K_X + E_1^+ ) \to \omega_{E_1} \to 0
\]
yields $h^0 (\mathcal{O}_X (K_X + E_1^+ ) ) = 1$. 

Now look at the case $R' = E_1^+ - E_2^+$. Clearly, for degree reasons, $\overline{\mathcal{L}}$ has no sections in this case, and by Serre duality, we just have to show that
\[
h^0 (3 K_X - 2R' ) = h^0 (3K_X - 2 E_1^+ + 2 E_2^+ )
\]
is zero. This follows from Lemma \ref{lEffective} below and the fact that $K_X - R'$ is not effective by \cite[Cor.\ 6.4(2)]{BBS12}.
\end{proof}

\begin{lem}\xlabel{lEffective}
Let $R'$ be a root in $\mathrm{Pic}(X)$ such that $h^0 (X, K_X-R') = 0$. Then 
\[
h^0 (3K_X -2R') =0.
\]
\end{lem}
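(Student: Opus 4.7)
My plan is to use a short exact sequence obtained from the linear equivalence $3K_X - 2R' \sim (K_X - R') + (2K_X - R')$ in order to reduce the vanishing to a cohomological problem on an auxiliary curve, and then exploit the Godeaux-specific geometry. A direct Riemann--Roch calculation (using $R' \cdot K_X = 0$ and $R'^2 = -2$) gives $\chi(3K_X - 2R') = 0$ and $\chi(2K_X - R') = 1$. Combined with Serre duality and the ampleness of $K_X$, this yields $h^2(3K_X - 2R') = h^0(-2K_X + 2R') = 0$ (since $(-2K_X + 2R') \cdot K_X = -2 < 0$) and similarly $h^2(2K_X - R') = h^0(-K_X + R') = 0$. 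In particular $h^0(2K_X - R') \geq 1$, so one may pick an effective $C \in |2K_X - R'|$.

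Multiplication by the corresponding section produces the short exact sequence
\begin{equation*}
0 \to \mathcal{O}_X(K_X - R') \to \mathcal{O}_X(3K_X - 2R') \to \mathcal{O}_C(3K_X - 2R') \to 0.
\end{equation*}
Passing to the long exact sequence and using the hypothesis $h^0(K_X - R') = 0$ gives $h^0(3K_X - 2R') \leq h^0(C, (3K_X - 2R')|_C)$. Adjunction identifies $(3K_X - 2R')|_C$ with $\omega_C(-R'|_C)$, and since $p_a(C) = 3$ and $\deg(R'|_C) = R' \cdot (2K_X - R') = 2$, Serre duality together with Riemann--Roch on $C$ convert the required vanishing into $h^0(C, \mathcal{O}_C(R'|_C)) = 0$.

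The main obstacle is establishing this non-effectivity, since a degree-$2$ line bundle on a genus-$3$ curve can carry up to two sections in general. The strategy is to choose $C$ reducible, with components drawn from the $50$ elliptic curves $E_{i,j}^{\pm}$ of canonical degree $1$ or from the reducible members $E_i^+ + E_i^-$ of the genus-$2$ pencil $|2K_X|$. The hypothesis $h^0(K_X - R') = 0$ together with results in the spirit of \cite[Cor.\ 6.4(2)]{BBS12} should translate into combinatorial constraints on the intersection numbers $R' \cdot E_{i,j}^{\pm}$, forcing $R'|_C$ to have negative degree on each component and hence no global sections. The delicate point will be verifying that such a $C$ always exists in $|2K_X - R'|$; this will likely require a case analysis tracking the torsion and $(-E_8)$-components of $R'$ in $\mathrm{Pic}(X)$.
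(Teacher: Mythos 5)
Your reduction is numerically correct as far as it goes: $\chi(2K_X-R')=1$ and $h^2(2K_X-R')=0$ do give an effective $C\in|2K_X-R'|$, and the restriction sequence together with adjunction and Serre--Riemann--Roch on the Gorenstein curve $C$ (with $p_a(C)=3$, $\deg R'|_C=2$) does convert the problem into $h^0(C,\mathcal{O}_C(R'|_C))=0$. But this is where the proof stops, and the remaining claim is not a smaller problem --- it is the \emph{same} problem. Indeed, $\chi(K_X-R')=0$, $h^0(K_X-R')=0$ by hypothesis, and $h^2(K_X-R')=h^0(R')=0$ (a root is never effective since $R'\cdot K_X=0$ and $K_X$ is ample), so $h^1(K_X-R')=0$ and the restriction map $H^0(3K_X-2R')\to H^0(C,\omega_C(-R'|_C))$ is an isomorphism. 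Your target $h^0(C,\mathcal{O}_C(R'|_C))=0$ is therefore exactly equivalent to the statement of the lemma, and the sketched strategy for it does not close the circle: a degree-$2$ bundle on a genus-$3$ curve can perfectly well be effective, and there is no reason $|2K_X-R'|$ should contain a member that is reducible with components among the $50$ elliptic curves or the genus-$2$ pencil --- for a general root $R'$ one expects $h^0(2K_X-R')=1$ with the unique member irreducible, so the proposed case analysis has nothing to work with. As written, this is a genuine gap rather than a routine verification.

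For contrast, the paper's argument avoids restriction to a curve altogether. It sets $B=2K_X+R'$, checks $\chi(B)=1$ and $h^2(B)=0$ so that $B$ is effective, and rewrites $3K_X-2R'=7K_X-2B$, so that sections of the bundle in question are degree-$7$ elements of the canonical ring vanishing doubly along $B$, i.e.\ elements of the second \emph{symbolic} power $I_{B,X}^{(2)}$ in degree $7$. The hypothesis enters via $h^0(3K_X-B)=h^0(K_X-R')=0$, which forces the homogeneous ideal $I_{B,X}$ to be generated in degrees $\ge 4$, hence $I_{B,X}^2$ in degrees $\ge 8$; the theorem of Ein--Lazarsfeld--Smith identifies $I_{B,X}^{(2)}$ with $I_{B,X}^2$, giving the vanishing in degree $7$. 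If you want to salvage your approach, you would need an independent geometric input ruling out effectivity of $R'|_C$; the symbolic-power mechanism is precisely what supplies the missing leverage.
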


\begin{proof}
Let $B= R'+ 2K$. Then $\chi(B)=1$ and $h^2(B)=h^0(K-B)=0$ for degree reasons. Hence $B$ is effective. Then 
\[ 3K - 2R' = 7K- 2B.\] 
Since 
\[
h^0(3K -B) = h^0(K-R') = 0
\]
the ideal $I_{B, X}$ is generated in degrees $\ge 4$. Therefore, the ideal $I_{B,X}^2$ is generated in degrees $\ge 8$. By \cite[Thm.\ A]{ELS} the second symbolic power of $I_{B, X}$ is equal to $I_{B, X}^2$ and hence we get $h^0(7K - 2B)=0$. 
\end{proof}

\begin{remark}\xlabel{rAlternative}
Alternatively, one can check that $h^0 (3K-2R') = 0$ as follows.
By \cite{BBS12} there exists a line $L^0_0$ on $X$ such that $3K_X - L_0^0$ gives the genus $2$ pencil on $X$ and hence we can write
\begin{gather*}
3K_X - 2E_1^+ + 2E_2^+ = 3K_X - 2E_1^+ + 2(3K_X - L_0^0 -E_2^-)\\
= 9K_X - 2L_0^0 - 2E_1^+ - 2E_2^- .
\end{gather*}
We then look at the cover $p\colon Y \to X$, where $Y \subset \IP^3$ is the Fermat quintic with its $\IZ/5$-action, and remark that there are no $\IZ/5$-invariant sections in $H^0 (Y, 9K_Y) = H^0 (Y, \mathcal{O}(9))$ which vanish in the subscheme $p^* ( 2L_0^0 + 2E_1^+ + 2E_2^- )$ of $Y$. This can be checked by a computer algebra calculation with Macaulay 2 \cite{BBS12a}. In fact, there are even no degree $9$ polynomials on $\IP^3$ in the ideal of the subscheme $ 2L_0^0 + 2E_1^+ + 2E_2^-$ except multiples of the Fermat equation. 

\end{remark}

Hence our expectation is

\begin{conj}\xlabel{cExceptionalPair}
The exceptional pair 
\[
\Bigl( \mathcal{O}_X, \mathcal{O}_X \bigl( 2(K_X - E_1^+ + E_2^+)\bigr) \Bigr)
\]
cannot be extended to a numerically  semiorthonormal basis in $\mathrm{K}_0 (X) /(\mathrm{tors})$. 
\end{conj}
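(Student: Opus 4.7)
The strategy is to transport the question along the lattice isomorphism $\K_0(X)/(\mathrm{tors})\simeq \K_0(S)$ of Section \ref{sLattice} and then carry out the program sketched in Lemma \ref{lHeuristic} without presupposing the folklore orbit statement as a hypothesis. Under this isomorphism the pair $(\mathcal{O}_X,\mathcal{O}_X(2(K_X-E_1^++E_2^+)))$ is identified with $([\mathcal{O}_S],[\mathcal{L}^{-1}])$, where $\mathcal{L}=\mathcal{O}_S(2(-K_S+R))$ for the root $R$ corresponding to $-(E_1^+-E_2^+)$. Any hypothetical numerical extension on $X$ therefore transports to a numerically semiorthonormal basis of $\K_0(S)$ beginning with these two classes, so the conjecture reduces to showing that no such extended basis exists in $\K_0(S)$.

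The conceptual route is to prove the folklore lattice statement itself: every numerically semiorthonormal basis of $\K_0(S)$ lies in the braid group orbit, modulo line-bundle tensoring and sign changes, of the standard basis coming from a full exceptional collection on $S$. Granted this, the argument of Lemma \ref{lHeuristic} goes through verbatim: an extension would yield a full exceptional collection $(E_1,\dots,E_{11})$ on $S$ with $[E_1]=[\mathcal{O}_S]$ and $[E_2]=[\mathcal{L}^{-1}]$; purity of exceptional objects on del Pezzo surfaces forces $E_1\cong \mathcal{O}_S$ and $E_2\cong \mathcal{L}^{-1}$ up to shift, and this contradicts Proposition \ref{pEffectivity} since $\mathcal{L}$ has a section. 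To establish the folklore statement I would induct on the rank, using right mutations by $[\mathcal{O}_S]$ to peel off the first class and reduce the problem to a numerically semiorthonormal basis of the orthogonal sublattice. The genuinely new ingredient is a ``numerical lifting'' step: given a numerically semiorthonormal basis in $\K_0(S)$, one must produce an actual full exceptional collection in $\D^b(S)$ whose classes match it up to sign. For this one can try to combine the Kuleshov--Orlov description of exceptional bundles on del Pezzo surfaces with the Gorodentsev--Rudakov transitivity of the braid group action on full exceptional collections in $\D^b(S)$.

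If the general folklore statement resists proof, a viable fallback is direct enumeration. The Weyl group $W(E_8)$ acts on $\Pic(S)\simeq \mathbf{1}\perp (-E_8)$ preserving the Euler form, and the stabilizer of the pair $([\mathcal{O}_S],[\mathcal{L}^{-1}])$ in $W(E_8)$ can be computed explicitly; using it, one enumerates candidate extensions of the pair to a numerically semiorthonormal basis and verifies nonexistence by hand or by computer algebra. The main obstacle in either route is the combinatorial size of $E_8$: making the numerical lifting step go through in full generality, or else keeping the enumeration tractable, is the principal difficulty, and is presumably why the statement is left only as a conjecture in the paper.
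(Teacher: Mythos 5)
The first thing to note is that this statement is a \emph{conjecture} in the paper: the authors do not prove it. They observe that it would follow from the folklore transitivity of the braid group action (together with line-bundle twists and sign changes) on numerically semiorthonormal bases of $\mathrm{K}_0(S)$ --- exactly the implication recorded in Lemma \ref{lHeuristic} --- state explicitly that they could not find a proof of that transitivity in the literature, and report only a probabilistic computer search that found no numerical extensions of the pair beyond length $9$. They then sidestep the conjecture entirely by proving unextendability of a different, fully specified length-$9$ sequence $(m_1,\dots,m_9)$, whose numerical orthogonal complement has rank $2$, so that the obstruction becomes the clean arithmetic fact that the form $-s^2-4st+4t^2$ does not represent $1$ modulo $4$ (Proposition \ref{pUnextendable}).

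Your proposal reproduces the paper's own heuristic but does not close the gap. The reduction to $\mathrm{K}_0(S)$, the appeal to purity of exceptional objects on del Pezzo surfaces, and the rigidity step are fine and are literally the content of Lemma \ref{lHeuristic}; the entire difficulty is concentrated in the step you defer. The orbit/lifting statement (every numerically semiorthonormal basis of $\mathrm{K}_0(S)$ is realized, up to sign and twist, by a full exceptional collection) is open, and that is precisely why the authors leave the statement as a conjecture. The enumeration fallback is also not obviously finite in a useful sense: the orthogonal complement of the pair $([\mathcal{O}_S],[\mathcal{L}^{-1}])$ has rank $9$, the set of numerically exceptional classes in it is infinite, and the stabilizer of the pair in $W(E_8)$ does not by itself reduce the problem to a hand-checkable list; moreover the authors' own experiments found genuine numerical extensions of this pair up to length $9$, so any argument must separate length $9$ from length $11$. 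In short, you have correctly identified what would need to be proved, but neither route is carried out, and as far as the paper is concerned the statement remains open.
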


As we explained above, this would follow from the expected transitivity result for mutations on numerically semiorthonormal bases. We checked Conjecture \ref{cExceptionalPair} probabilistically by a computer, i.e. we found no numerical extensions. In fact, the results of that experiment were that the maximal length to which the above exceptional pair could be extended numerically is $9$. All extensions we found were of  the numerical shape
\[
(\mathcal{O}_S, \mathcal{O}_S (-2e_1), \mathcal{O}_{C_1}, \dots , \mathcal{O}_{C_7})
\]
with the $\mathcal{O}_{C_i}$ structure sheaves of $(-1)$-curves $C_i$ on $S$ such that $(\mathcal{O}_{C_1}, \dots , \mathcal{O}_{C_7})$ is completely orthogonal in $\mathrm{K}_0 (S)$. 

We circumvent this difficulty by exhibiting an unextendable exceptional sequence  of length $9$ 
\[
(\mathcal{O}_S(K_S) , \mathcal{O}_S (K_S + e_2), \dots , \mathcal{O}_S (K_S +e_8), \mathcal{O}_S (K_S - 2e_1)) =: (m_1, \dots , m_9).
\]
which contains $\mathcal{O}_S (2e_1)$ as a difference $m_9^{\vee}\otimes  m_1$. 
Notice that $\mathcal{O}_S (K_S + e_i)$, $i=1, \dots , 8$ is a numerically orthogonal set of roots. 

\begin{remark}\xlabel{rNumerics}
The sequence $(m_1, \dots , m_9)$ is related to the sequence
\[
(\mathcal{O}_S, \mathcal{O}_S(-2e_1), \mathcal{O}_{e_2}, \dots \mathcal{O}_{e_8}) .
\]
as follows. Consider the sequence of (derived) duals
\[
(\mathcal{O}_{e_8}^{\vee}, \dots , \mathcal{O}_{e_2}^{\vee}, \mathcal{O}_S(2e_1), \mathcal{O}_S)
\]
and mutate the $\mathcal{O}_{e_i}^{\vee}$ to the right across $\mathcal{O}_S(2e_1)$ to obtain 
\[
(\mathcal{O}_S (2e_1), \mathcal{O}_S(2e_1 + e_8)[1], \dots , \mathcal{O}_S(2e_1 + e_2)[1], \mathcal{O}_S).
\]
Forgetting the shifts, twisting by $\mathcal{O}_S(K-2e_1)$ and reordering the completely orthogonal terms,  we get the above sequence.
\end{remark}

\begin{definition}\xlabel{dLattice}
Consider the lattice 
\[
\Lambda:= \left\{ x   + y_0  h + y_1 e_1 + \dots + y_8 e_8 + \frac{1}{2} z p \right\}\subset \mathrm{CH}^* (S) [1/2]
\]
where $(x, y_0, y_1, \dots , y_8, z)\in \IZ^{11}$ and $p$ is the class of a point. We set $v= (x,y,z)$, where
\[
y = y_0  h + y_1 e_1 + \dots + y_8 e_8.
\]
\end{definition}

We have a Chern character map 
\[
\mathrm{ch}\colon \mathrm{K}_0 (S) \to \mathrm{CH}^* (S) [1/2]
\]
sending a vector bundle $\mathcal{E}$ of rank $r$ to
\[
\mathrm{ch}(\mathcal{E}) = r + c_1 (\mathcal{E}) +  \frac{1}{2} (c_1(\mathcal{E})^2 - 2 c_2 (\mathcal{E})) .
\]
Notice that the Chern character is injective. It identifies the lattice $\mathrm{K}_0 (S)$ with a lattice $\IZ^{11}\simeq \mathrm{ch}(\mathrm{K}_0(S))\subset \Lambda \subset \mathrm{CH}^*(S)[1/2]$ with basis
\[
1, h + \frac{1}{2}p, e_1 - \frac{1}{2}p, \dots , e_8 - \frac{1}{2}p , p.
\]
This basis is obtained for example from the exceptional sequence \[ (\mathcal{O}_S, \mathcal{O}_S (h), \mathcal{O}_S (2h), \mathcal{O}_{e_1}, \dots , \mathcal{O}_{e_8}).\] 
The lattice $\Lambda$ contains the previous lattice as a sublattice of index $2$. 

The Riemann-Roch theorem says
\[
\chi (S, \mathcal{E}) = \deg \left( \mathrm{ch}(\mathcal{E}).\mathrm{td}(\mathcal{T}_S)\right)_2
\]
where
\[
\mathrm{td}(\mathcal{T}_S) = 1 - \frac{1}{2}K_S + \frac{1}{12} (K_S^2 + c_2 ) = 1 - \frac{1}{2} K_S + p.
\]
The subscript $2$ in the second but last formula means that one only considers the top dimensional component. Hence in terms of the vector $v= (x,y,z)$
\[
\chi (S, \mathcal{E}) = x - \frac{1}{2} y . K_S + \frac{1}{2} z \, . 
\]
If $\mathcal{E}_1$ and $\mathcal{E}_2$ are two bundles, then 
\[
\chi (\mathcal{E}_1, \mathcal{E}_2 ) = \chi (S, \mathcal{E}_1^{\vee} \otimes \mathcal{E}_2 )
\]
and 
\begin{gather*}
\mathrm{ch}(\mathcal{E}_1^{\vee} \otimes \mathcal{E}_2 ) = \mathrm{ch}(\mathcal{E}_1^{\vee}). \mathrm{ch} (\mathcal{E}_2) = (x_1 - y_1 + \frac{1}{2} z_1 )( x_2 + y_2 + \frac{1}{2} z_2) \\
= x_1x_2  + (x_1y_2 - x_2 y_1) + \frac{1}{2}(x_1z_2 + x_2z_1 - 2y_1y_2)
\end{gather*}
whence
\[
\chi (\mathcal{E}_1, \mathcal{E}_2 ) = x_1x_2 - \frac{1}{2} (x_1y_2 - x_2 y_1).K_S + \frac{1}{2} (x_1z_2 + x_2z_1-2y_1y_2) \, .
\]
Clearly, since every sheaf has a resolution by locally free ones on $S$ and both sides of the previous equation are bilinear in $\mathcal{E}_1$, $\mathcal{E}_2$ resp. $v_1$, $v_2$, the formula holds for arbitrary classes $\mathcal{E}_1$ and $\mathcal{E}_2$ in $\mathrm{K}_0 (S)$. Also, $x,y,z$ have integer coordinates for arbitrary classes in $\mathrm{K}_0 (S)$.

\begin{prop}\xlabel{pUnextendable}
The sequence $(m_1, \dots , m_9)$ is numerically unextendable in $\mathrm{K}_0 (S)$. 
\end{prop}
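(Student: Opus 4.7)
The plan is to turn the problem into pure linear algebra over $\IZ$. Write a hypothetical extending class as $v = (x, y, z) \in \mathrm{ch}(\mathrm{K}_0(S))$ with $y = y_0 h + \sum_{i=1}^8 y_i e_i$, and set $s := \sum_{i=1}^8 y_i$. By Definition \ref{dExceptional} the condition for $(m_1, \dots, m_9, v)$ to be numerically semiorthonormal of length $10$ is $\chi(v, m_i) = 0$ for $i = 1, \dots, 9$ together with $\chi(v, v) = 1$.

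First I would expand the nine orthogonality equations using the bilinear formula from Definition \ref{dLattice} and the identity $y \cdot K_S = -3 y_0 - s$. The output should be essentially the same expression $3 y_0 + s + z$ for the seven indices $i = 2, \dots, 8$, modified only by an additive $2 y_i$, since each $m_i$ differs from $m_1$ by a root; the equation from $m_1$ is shifted by $2x$ and the one from $m_9$ by $-4 y_1$ and an additional $4x$. Subtracting the $m_1$-equation from the others then forces $y_i = x$ for $i = 2, \dots, 8$ and $y_1 = -x/2$. In particular $x$ is automatically even, $x = 2 t$, and the remaining unknowns are pinned down to $s = 13 t$ and $z$ linear in $t, y_0$. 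Thus all candidate solutions form a $2$-parameter family indexed by $(t, y_0) \in \IZ^2$; the sublattice condition for $v \in \mathrm{ch}(\mathrm{K}_0(S))$ amounts to $z + s - y_0 \in 2\IZ$ and turns out to hold automatically.

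Substituting this family into $\chi(v, v) = x^2 + x z - y^2$, using $y^2 = y_0^2 - \sum y_i^2$, I expect the diagonal Euler form to collapse to a very sparse binary quadratic, of the shape $\chi(v, v) = 8 t^2 - (y_0 + 3 t)^2$. Setting $\chi(v, v) = 1$ and letting $u := y_0 + 3 t$ then reduces the extendability question to the Pell-type equation
\[
u^2 - 8 t^2 = -1.
\]
This has no integer solution, since it would force $u^2 \equiv 7 \pmod{8}$, whereas the quadratic residues modulo $8$ are only $\{0, 1, 4\}$. Consequently no extending class $v$ exists.

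The proof is essentially an exercise in careful bookkeeping; the main technical risk is a sign or coefficient slip when setting up the nine linear equations from the Riemann--Roch formula. The pleasing feature is that the diagonal form, restricted to the $2$-plane of orthogonality solutions, collapses to a single negative Pell equation which is then ruled out by a one-line modular obstruction.
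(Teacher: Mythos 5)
Your proposal is correct and follows essentially the same route as the paper: both compute the rank-two right-orthogonal complement of $(m_1,\dots,m_9)$ in $\mathrm{K}_0(S)$ and show the restricted Euler form does not represent $1$ by a congruence, your binary form $8t^2-(y_0+3t)^2$ being $\mathrm{GL}_2(\IZ)$-equivalent to the paper's $-s^2-4st+4t^2$ (both of discriminant $32$), with your mod $8$ obstruction matching the paper's mod $4$ one. (The only slip is the claimed extra ``$4x$'' in the $m_9$-equation --- one actually gets $2\chi(v,m_9)=z+3y_0+s-4y_1$ --- but your stated conclusions $y_1=-x/2$, $s=13t$ and the final form are the ones that follow from the correct equation, so nothing downstream is affected.)
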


\begin{proof}

We consider
\begin{align*}
	m_{10} &= -h + \frac{3}{2}p \\
	m_{11} &=  2 + 2k + h - 3e_1 - p 
\end{align*}
in the lattice $\mathrm{ch}(\mathrm{K}_0 (S)) \subset \Lambda$. One checks by direct computation that $m_{10}$ and $m_{11}$ are numerically semiorthogonal to $m_1,\dots, m_9$ and span the orthogonal complement over $\IZ$ in the lattice $\mathrm{ch}(\mathrm{K}_0(S))$. Indeed, the matrix
\[
	\begin{pmatrix}
		\chi(m_{10},m_{10}) & \chi(m_{10},m_{11}) \\
		\chi(m_{11},m_{10}) & \chi(m_{11},m_{11}) 
	\end{pmatrix}
		=
	\begin{pmatrix}
		-1 &	1 \\
		-5 &	4
	\end{pmatrix}
\]
hat determinant equal to $1$. Furthermore,
\[
	\chi(sm_{10}+tm_{11}) = -s^2-4st+4t^2.
\]
Since $-s^2-4st+4t^2 = 1$ has no solution modulo $4$, there exists no class in $\mathrm{K}_0(S)$ that is numerically semiorthogonal to $m_1,\dots,m_9$ and numerically exceptional. 
\end{proof}

\section{The counterexamples}\xlabel{sCounter} 

Before proceeding further we recall the following fact from \cite{BBS12}: the roots

\begin{gather*}
A_1 = E^-_{0,4}-E^+_{4,4}, \\
A_2 = E_{4,0}^+ -E_{3,0}^+, \\
A_3 = E_{3,0}^+ -E_{2,0}^+, \\
A_4 = E_{2,0}^+ -E_{1,0}^+, \\
A_5 = E_{1,0}^+ - E_{0,0}^-, \\
A_6 = E_{0,2}^- - E_{0,4}^-, \\
A_7 = E_{0,3}^- - E_{0,0}^- , \\
A_8 = E^-_{0,4} - E_{0,1}^- 
\end{gather*}

in $\mathrm{Pic}(X)/(\mathrm{tors})$ form a $(-A_8)$-subsystem of the root lattice $(-E_8) \subset \mathrm{Pic}(X)/(\mathrm{tors})$. The bundles 
\[
\mathcal{O}(A_1),  \mathcal{O}(A_1+A_2), \dots , \mathcal{O}(A_1 + \dots + A_8)
\]
form a completely orthogonal exceptional sequence whose terms give roots in $\mathrm{Pic}(X)/(\mathrm{tors})$ by \cite{BBS12}. 

\begin{prop}\xlabel{pSequence}
Consider the following line bundles on $X$:
\begin{eqnarray*}
\mathcal{M}_1 &= \mathcal{O}( K_X)\\
\mathcal{M}_2 &= \mathcal{O}(A_1+A_2)\\
\mathcal{M}_3 &= \mathcal{O}(A_1+A_2+A_3)\\
\mathcal{M}_4 &= \mathcal{O}(A_1+\dots +A_4) \\
\mathcal{M}_5 &= \mathcal{O}(A_1+\dots +A_5)\\
\mathcal{M}_6 &= \mathcal{O}(A_1+\dots +A_6)\\
\mathcal{M}_7 &= \mathcal{O}(A_1+\dots +A_7)\\
\mathcal{M}_8 &=\mathcal{O}(A_1+\dots +A_8) \\
\mathcal{M}_9 &= \mathcal{O}(3K -2A_1) .
\end{eqnarray*}
Then $(\mathcal{M}_1, \dots , \mathcal{M}_9)$ becomes an exceptional sequence after tensoring each of the $\mathcal{M}_i$ with an appropriate torsion line bundle. 

\end{prop}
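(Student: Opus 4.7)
The plan is to verify two conditions for suitably chosen torsion twists $\tau_i \in \mathrm{Pic}(X)_{\mathrm{tors}} \cong \IZ/5$: exceptionality of each $\mathcal{M}_i \otimes \tau_i$ individually, and semiorthogonality $\mathrm{RHom}^{\bullet}(\mathcal{M}_j \otimes \tau_j, \mathcal{M}_i \otimes \tau_i) = 0$ for all $j > i$. Exceptionality is automatic, since for any line bundle $L$ on the Godeaux surface one has $\mathrm{RHom}^{\bullet}(L, L) \cong H^{\bullet}(X, \ko_X) \cong \IC$ by $p_g(X) = q(X) = 0$. Thus the content lies in arranging acyclicity of the line bundle $\mathcal{M}_i \otimes \mathcal{M}_j^{-1} \otimes (\tau_i \otimes \tau_j^{-1})$ for each of the $\binom{9}{2} = 36$ pairs $j > i$.

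First I would tabulate the numerical differences $\mathcal{M}_i \otimes \mathcal{M}_j^{-1}$. They split into three groups: for $2 \le i < j \le 8$, one gets $\ko_X(-(A_{i+1}+\cdots + A_j))$ (21 pairs); for $(1, j)$ with $2 \le j \le 8$, one gets $\ko_X(K_X - A_1 - \cdots - A_j)$, and additionally $\ko_X(-2K_X + 2A_1)$ for $(1,9)$ (8 pairs); for $(i, 9)$ with $2 \le i \le 8$, one gets $\ko_X(3A_1 + A_2 + \cdots + A_i - 3K_X)$ (7 pairs). Since each $A_k$ is a root ($K_X \cdot A_k = 0$, $A_k^2 = -2$), a short Riemann--Roch computation should give $\chi = 0$ for each class, reducing acyclicity to the vanishing of $h^0$ for both the bundle and its Serre dual.

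For the 21 pairs in the first group, acyclicity with appropriate torsion twists is already the content of the construction in \cite{BBS12}, where $\ko_X(A_1 + \cdots + A_k)$ tensored with specific torsion classes was shown to form a completely orthogonal exceptional sequence. One thus imports those torsion choices to define $\tau_2, \ldots, \tau_8$, handling the first 21 conditions at a stroke. For the 15 remaining pairs, which involve $\mathcal{M}_1$ or $\mathcal{M}_9$, I would verify the required $h^0$-vanishing for each candidate twist either (a) by writing the divisor in terms of the elliptic curves $E_{i,j}^{\pm}$ and invoking effectivity results of \cite{BBS12} and Lemma \ref{lEffective}, or (b) by pulling back to the Fermat quintic $Y$ and running a Macaulay 2 computation to exclude $\IZ/5$-invariant sections of the appropriate bundle on $\IP^3$ vanishing on the relevant subscheme, in the spirit of Remark \ref{rAlternative}.

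The main obstacle is the combinatorics of simultaneously pinning down $\tau_1$ and $\tau_9$: each of the 15 new conditions constrains a single torsion difference $\tau_i \otimes \tau_j^{-1}$ in $\IZ/5$, and the eight conditions involving $\tau_1$ (respectively $\tau_9$) must admit a common solution. The hard part is therefore not any single cohomological step but the case-by-case $h^0$-vanishings for the 15 new differences, together with the verification that the acyclic torsion classes so identified are compatible with the choices $\tau_2, \ldots, \tau_8$ inherited from \cite{BBS12}; one hopes that the explicit chain of roots $A_1, \ldots, A_8$ was chosen with sufficient flexibility that such a simultaneous solution exists.
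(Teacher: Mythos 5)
Your overall strategy --- reduce everything to acyclicity of the $36$ numerical differences, import the pairs among $\mathcal{M}_2,\dots,\mathcal{M}_8$ from \cite{BBS12}, and settle the remaining ones by effectivity arguments or a Macaulay~2 computation on the Fermat quintic --- is exactly the paper's. The genuine gap is the compatibility problem you flag in your last paragraph and leave as a hope. As you have set things up, $\tau_2,\dots,\tau_8$ are already pinned down by the $21$ conditions imported from \cite{BBS12}, so the eight conditions involving $\tau_1$ (and the eight involving $\tau_9$) each constrain several of the differences $\tau_i\otimes\tau_j^{-1}$ in $\IZ/5$ at once, and you give no argument that a simultaneous solution exists; without one the proof is incomplete. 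The paper dissolves this by using a stronger input from \cite{BBS12}: the subsequence $(\mathcal{M}_1,\dots,\mathcal{M}_8)$ is exceptional after twisting its terms by \emph{arbitrary, independent} torsion line bundles, i.e.\ for each of the $\binom{8}{2}=28$ differences within the first eight terms \emph{all five} torsion twists are acyclic, not just one. With that, the only binding constraints are the eight conditions $\mathrm{RHom}^{\bullet}(\mathcal{M}_9\otimes\tau_9,\,\mathcal{M}_i\otimes\tau_i)=0$, and once $\tau_9$ is fixed each of these involves a distinct free parameter $\tau_i$; so it suffices to show that for each $i$ \emph{some} torsion twist of $\mathcal{M}_i\otimes\mathcal{M}_9^{-1}$ is acyclic, and these eight choices can be made independently of one another.

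For that last existence statement the paper also uses a slightly slicker test than your option (b): writing $\mathcal{M}_i-\mathcal{M}_9\sim nK_X-D$ with $D$ effective, one checks by computer that the ideal of $p^*(D)\subset Y$ has fewer than five sections in degree $n$. Since this space of sections splits into five character eigenspaces, one for each torsion twist, the pigeonhole principle produces a twist with $h^0=0$ (and $h^2$ vanishes for degree reasons while $\chi=0$, so the twist is acyclic). This avoids having to identify which specific eigenspace vanishes. So: same route, but you need to replace ``imports those torsion choices'' by the stronger all-twists statement from \cite{BBS12}, after which your compatibility worry disappears rather than needing to be resolved.
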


\begin{proof}
It follows from \cite{BBS12} that the sequence $(\mathcal{M}_1, \dots , \mathcal{M}_8)$ is exceptional and this remains true after twisting all these bundles by arbitrary torsion line bundles.  To prove that there exists a torsion line bundle $\mathcal{O}_{\tau}$ such that  $\mathcal{M}_i \otimes \mathcal{M}_9^{-1}\otimes \mathcal{O}_{\tau}$ is acyclic, we write up to numerical equivalence
\[
\mathcal{M}_i - \mathcal{M}_9 \sim nK_X - D
\]
with $D$ effective and check that the ideal of $p^* (D)\subset Y$ has less than five sections in degree $n$. A Macaulay 2 script doing this can be found at \cite{BBS12a}. 
\end{proof}

\begin{prop}\xlabel{pQuadric}
The sequence of Proposition \ref{pSequence} is unextendable. That is, there does not exist an exceptional object $\mathcal{F} \in \mathrm{D}^b (X)$ such that the sequence $(\mathcal{M}_1,\dots ,\mathcal{M}_9,\mathcal{F})$ is exceptional. 
\end{prop}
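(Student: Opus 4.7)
The plan is to reduce the claim to the numerical obstruction of Proposition \ref{pUnextendable}, transported through a lattice isomorphism between $\mathrm{K}_0(X)/(\mathrm{tors})$ and $\mathrm{K}_0(S)$. Any exceptional $\mathcal{F}\in\mathrm{D}^b(X)$ extending $(\mathcal{M}_1,\dots,\mathcal{M}_9)$ yields a class $[\mathcal{F}]$ that is right $\chi$-orthogonal to $[\mathcal{M}_1],\dots,[\mathcal{M}_9]$ and numerically exceptional. Since torsion line bundles have trivial Chern character, Hirzebruch-Riemann-Roch makes $\chi$ factor through $\mathrm{K}_0(X)/(\mathrm{tors})$, so it suffices to exclude such a class numerically.

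The Godeaux surface $X$ and the degree-$1$ del Pezzo $S$ share $\chi(\mathcal{O})=1$, $K^2=1$, $c_2=11$, and both have N\'eron-Severi lattice $\langle 1\rangle\perp(-E_8)$ with $K$ generating the rank-one summand. Riemann-Roch therefore yields the same formula for the Euler form on $\mathrm{K}_0/(\mathrm{tors})$ in terms of $r, c_1, c_2$, and we obtain a lattice isometry $(\mathrm{K}_0(X)/(\mathrm{tors}),\chi)\simeq(\mathrm{K}_0(S),\chi)$. Using the $W(E_8)$-freedom in the choice of isometry on the $-E_8$ summand, I would fix the identification by sending $K_X\mapsto K_S$, $K_X-A_1\mapsto -e_1$ (both a $(-1)$-class: square $-1$ and pairing $1$ with $K$), and $A_{i+1}\mapsto e_{i+1}-e_i$ for $i=1,\dots,7$, so that $A_1\mapsto K_S+e_1$ and the $A_8$-Dynkin relations among the $A_i$ are preserved.

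Under this identification one checks telescopically that $[\mathcal{M}_i]=\mathcal{O}(A_1+\dots+A_i)\mapsto\mathcal{O}_S(K_S+e_i)=m_i$ for $2\le i\le 8$, while $[\mathcal{M}_1]\mapsto m_1$ is trivial, and the rewriting $3K_X-2A_1=K_X+2(K_X-A_1)$ gives $[\mathcal{M}_9]\mapsto\mathcal{O}_S(K_S-2e_1)=m_9$. Proposition \ref{pUnextendable} then applies directly: on the rank-$2$ right $\chi$-orthogonal complement of $(m_1,\dots,m_9)$ in $\mathrm{K}_0(S)$, the form $-s^2-4st+4t^2$ never attains $1$ (indeed, it is never $\equiv 1\pmod 4$). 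Transporting back rules out $[\mathcal{F}]$.

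The main obstacle is the bookkeeping to pin down the identification and verify the matching $[\mathcal{M}_i]\mapsto m_i$ with the $-E_8$ root data on both sides. A self-contained alternative, avoiding any appeal to $S$, would be to compute the $\chi$-matrix of $(\mathcal{M}_1,\dots,\mathcal{M}_9)$ directly on $X$ using Riemann-Roch and the intersection numbers of $K_X$ and the $A_i$, determine generators of its right $\chi$-orthogonal complement inside $\mathrm{K}_0(X)/(\mathrm{tors})$, and reproduce the same modulo-$4$ obstruction on the resulting rank-$2$ lattice.
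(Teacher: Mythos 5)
Your proposal is correct and follows essentially the same route as the paper: both reduce the statement to the numerical obstruction of Proposition \ref{pUnextendable} by transporting $(\mathcal{M}_1,\dots,\mathcal{M}_9)$ to $(m_1,\dots,m_9)$ via an isometry $\mathrm{Pic}(X)/(\mathrm{tors})\simeq\mathrm{Pic}(S)$ whose existence rests on the transitivity of the Weyl group on $(-A_8)$-subsystems of $(-E_8)$ (Borel--de Siebenthal), which is exactly the ``$W(E_8)$-freedom'' you invoke. Your additional remarks --- that $\chi$ descends to $\mathrm{K}_0/(\mathrm{tors})$ so the torsion twists are harmless, and that equal $\chi(\mathcal{O})$, $K^2$ and N\'eron--Severi lattices make the Riemann--Roch forms agree --- only make explicit steps the paper leaves implicit.
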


\begin{proof}
Notice that there is an isometry of lattices $\mathrm{Pic}(S) \to \mathrm{Pic}(X)/(\mathrm{tors})$ mapping $(m_1, \dots , m_9)$ to $(\mathcal{M}_1, \dots , \mathcal{M}_9)$. This follows since each orthogonal system of eight roots is of the form
\[
A_1, A_1+A_2, \dots , A_1 + \dots + A_8
\]
where $A_1, \dots , A_8$ form a $(-A_8)$-subsystem. The corresponding $(-A_8)$-system in $\mathrm{Pic}(S)$ is $(k+e_1, e_2-e_1, \dots , e_8-e_7)$, and the Weyl group action on $(-A_8)$ subsystems of the $(-E_8)$-lattice is transitive by the Borel-Siebenthal theorem. 
By Proposition \ref{pUnextendable}, this immediately implies the result.
\end{proof}

\begin{thm}\xlabel{tMain}
The derived category of the classical Godeaux surface $X$ does not satisfy the Jordan-H\"older property. 
\end{thm}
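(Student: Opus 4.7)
The plan is to combine Propositions \ref{pSequence} and \ref{pQuadric} with the length-$11$ maximal exceptional sequence exhibited in \cite{BBS12}, thus producing two maximal exceptional sequences in $\Db(X)$ of different lengths, and then argue that the Jordan-H\"older property would force any two maximal semiorthogonal decompositions refining these sequences to contain the same number of pieces equivalent to $\Db(\mathrm{pt})$, contradicting $11 \ne 9$.

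First dispose of the Noetherian issue: if Noetherianity fails, the Jordan-H\"older property fails by Definition \ref{dJordanHoelder}, so I may assume it holds. For an unextendable exceptional sequence $(F_1,\dots,F_n)$ generating the admissible subcategory $\mathcal{A}=\langle F_1,\dots,F_n\rangle$, both orthogonals ${}^\perp\mathcal{A}$ and $\mathcal{A}^\perp$ contain no exceptional object: the left case is Definition \ref{dExceptional} directly, and the right case is the remark immediately following it (mutate a hypothetical extension to the right across $\mathcal{A}$). Consequently, no admissible subcategory of ${}^\perp\mathcal{A}$ can be equivalent to $\Db(\mathrm{pt})$, because such a subcategory would be generated by an exceptional object lying in ${}^\perp\mathcal{A}$. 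Noetherianity ensures that any refinement process terminates (any strictly descending sequence of admissible subcategories is finite), producing a maximal semiorthogonal decomposition of $\Db(X)$ whose pieces equivalent to $\Db(\mathrm{pt})$ are exactly $\langle F_1\rangle,\dots,\langle F_n\rangle$.

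Applying this to the length-$11$ sequence from \cite{BBS12} and to the length-$9$ sequence $(\mathcal{M}_1,\dots,\mathcal{M}_9)$ yields two maximal semiorthogonal decompositions $\mathfrak{D}_1$ and $\mathfrak{D}_2$ of $\Db(X)$ whose numbers of $\Db(\mathrm{pt})$-pieces are exactly $11$ and $9$, respectively. The Jordan-H\"older property requires a bijection between the pieces of $\mathfrak{D}_1$ and $\mathfrak{D}_2$ matching them up to equivalence, and in particular preserving the count of $\Db(\mathrm{pt})$-pieces; since $11 \ne 9$, this is a contradiction.

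The real content of the theorem is carried by the preceding sections: Proposition \ref{pSequence} produces the length-$9$ sequence and Proposition \ref{pQuadric} establishes its unextendability via the lattice-theoretic Proposition \ref{pUnextendable}. Given these, the argument I have outlined is essentially formal; the only delicate point is to rule out that refinements of ${}^\perp\mathcal{A}$ could surreptitiously contribute extra $\Db(\mathrm{pt})$-pieces, which is precisely why the two-sided form of unextendability is needed.
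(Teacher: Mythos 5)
Your proposal is correct and follows the paper's approach: the paper's own proof of Theorem \ref{tMain} is a one-line appeal to Propositions \ref{pSequence} and \ref{pQuadric} plus the length-$11$ sequence of \cite{BBS12}, and you have simply filled in the formal deduction (splitting into the non-Noetherian case and the case where one refines both sequences to maximal decompositions and counts the pieces equivalent to $\mathrm{D}^b(\mathrm{pt})$, which cannot increase because the orthogonal of an unextendable exceptional sequence contains no exceptional object). This is exactly the reasoning the paper treats as ``clear'' and also makes explicit in Corollary \ref{cClemensGriffiths}.
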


\begin{proof}
This is clear from Propositions \ref{pSequence} and \ref{pQuadric} together with the fact that $\mathrm{D}^b (X)$ has an exceptional sequence of length $11$ by \cite{BBS12}. 
\end{proof}

\begin{cor}\xlabel{cClemensGriffiths}
Assume that the Noetherian property holds for $\mathrm{D}^b (X)$. Then there exists two maximal semiorthogonal decompositions $\mathfrak{D}_1$ and $\mathfrak{D}_2$ on $X$ whose Clemens-Griffiths components $\mathrm{CG}(\mathfrak{D}_1)$ and $\mathrm{CG}(\mathfrak{D}_2)$ are not equivalent.
\end{cor}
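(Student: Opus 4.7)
The plan is to turn the two maximal exceptional sequences into full semiorthogonal decompositions $\mathfrak{D}_1$ and $\mathfrak{D}_2$ of $\mathrm{D}^b(X)$, argue that their Clemens--Griffiths components coincide with the respective right orthogonals of the exceptional pieces, and then distinguish these orthogonals by a numerical invariant. First, using the Noetherian assumption, extend the length $11$ sequence $(E_1,\dots,E_{11})$ of \cite{BBS12} and the length $9$ sequence $(\mathcal{M}_1,\dots,\mathcal{M}_9)$ of Proposition \ref{pSequence} to \emph{maximal} semiorthogonal decompositions
\[
\mathfrak{D}_1:\; \mathrm{D}^b(X) = \langle \langle E_1\rangle,\dots,\langle E_{11}\rangle, \mathcal{S}_{1,1},\dots,\mathcal{S}_{1,r}\rangle, \quad \mathfrak{D}_2:\; \mathrm{D}^b(X) = \langle \langle \mathcal{M}_1\rangle,\dots,\langle \mathcal{M}_9\rangle, \mathcal{S}_{2,1},\dots,\mathcal{S}_{2,s}\rangle,
\]
where the $\mathcal{S}_{i,j}$ maximally refine the right orthogonals $\mathcal{A}_1$ and $\mathcal{A}_2$. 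Existence of such refinements follows from the Noetherian (equivalently Artinian) property by iteratively choosing admissible sub-decompositions.

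The second step is to observe that $\mathcal{A}_1$ and $\mathcal{A}_2$ contain no exceptional object. For $\mathcal{A}_2$ this is precisely the unextendability established in Proposition \ref{pQuadric}. For $\mathcal{A}_1$ one invokes that the classes $[E_1],\dots,[E_{11}]$ form a numerically semiorthonormal basis of $\mathrm{K}_0(X)/(\mathrm{tors}) \simeq \mathbb{Z}^{11}$: any further exceptional object in $\mathcal{A}_1$ would have a nonzero class (since $\chi(E,E)=1$) which is simultaneously numerically semiorthogonal to a basis, a contradiction. Consequently the same holds for every piece $\mathcal{S}_{i,j}$ in the refinements of $\mathcal{A}_i$.

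The third, and most delicate, step is the characterization of essential dimension zero subcategories: I will argue that an admissible subcategory $\mathcal{S} \hookrightarrow \mathrm{D}^b(X)$ with $\mathrm{ess.dim}(\mathcal{S}) = 0$ is generated by exceptional objects. Indeed, by definition such an $\mathcal{S}$ embeds fully faithfully with admissible image into $\mathrm{D}^b(Z)$ for $Z$ a smooth projective variety of dimension $0$, i.e.\ a disjoint union of points; but $\mathrm{D}^b(Z)$ is a direct sum of copies of $\mathrm{D}^b(\mathrm{pt})$, and every admissible subcategory of a category generated by exceptional objects is itself so generated. Combined with the previous paragraph, this forces $\mathrm{ess.dim}(\mathcal{S}_{i,j}) \geq 1 = n-1$ for every $j$, while $\mathrm{ess.dim}(\langle E_i\rangle) = \mathrm{ess.dim}(\langle \mathcal{M}_j\rangle) = 0$; by Definition \ref{dClemensGriffiths} we conclude $\mathrm{CG}(\mathfrak{D}_1) = \mathcal{A}_1$ and $\mathrm{CG}(\mathfrak{D}_2) = \mathcal{A}_2$. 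I expect this ess.dim $=0$ characterization to be the main obstacle, since it depends on the admissible-image hypothesis in Definition \ref{dEssentialDimension} and on being careful that admissible subcategories inherit exceptional generators.

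Finally, to see $\mathcal{A}_1 \not\cong \mathcal{A}_2$, I use additivity of $\mathrm{K}_0$ on semiorthogonal decompositions, which gives
\[
\mathrm{rk}\bigl(\mathrm{K}_0(\mathcal{A}_1)/(\mathrm{tors})\bigr) = 11 - 11 = 0, \qquad \mathrm{rk}\bigl(\mathrm{K}_0(\mathcal{A}_2)/(\mathrm{tors})\bigr) = 11 - 9 = 2.
\]
Since the rank of $\mathrm{K}_0/(\mathrm{tors})$ is invariant under triangulated equivalence, $\mathrm{CG}(\mathfrak{D}_1)$ and $\mathrm{CG}(\mathfrak{D}_2)$ cannot be equivalent, proving the corollary.
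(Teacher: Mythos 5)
Your proposal is correct and follows essentially the same route as the paper: refine the two orthogonal complements $\mathcal{B}_1,\mathcal{B}_2$ to maximal decompositions using the Noetherian property, identify them with the Clemens--Griffiths components via the ``ess.dim $=0$ iff generated by an exceptional object'' characterization, and distinguish them by the rank of $\mathrm{K}_0$. You merely make explicit two points the paper leaves implicit (that the complements contain no exceptional objects, and the semisimplicity argument behind the essential-dimension-zero characterization), which is fine.
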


\begin{proof}
We have two semiorthogonal decompositions
\begin{gather*}
\mathrm{D}^b (X) = \langle \mathcal{L}_1, \dots , \mathcal{L}_{11}, \mathcal{B}_1 \rangle , \\
\mathrm{D}^b (X) = \langle \mathcal{M}_1, \dots , \mathcal{M}_9, \mathcal{B}_2 \rangle 
\end{gather*}
where $(\mathcal{L}_1, \dots , \mathcal{L}_{11})$ is the exceptional sequence from \cite{BBS12}, and $\mathcal{B}_1$ and $\mathcal{B}_2$ are the respective orthogonal complements. Here $\mathcal{B}_1$ is a quasi-phantom with Grothendieck group $\IZ/5$. We can go on decomposing $\mathcal{B}_1$ and $\mathcal{B}_2$ if possible until we reach two maximal decompositions $\mathfrak{D}_1$ and $\mathfrak{D}_2$. Note that an indecomposable piece in a semiorthogonal decomposition has essential dimension $0$ if and only if it is generated by an exceptional object, and $\mathrm{CG}(\mathfrak{D}_i)$ are hence obtained by grouping together all those indecomposable pieces which are not generated by an exceptional object. Hence $\mathcal{B}_1$ and $\mathcal{B}_2$ are already equal to $\mathrm{CG}(\mathfrak{D}_1)$ resp.\ $\mathrm{CG}(\mathfrak{D}_2)$ in this case. But their Grothendieck groups have different ranks, so they are not equivalent.
\end{proof}

\begin{cor}\xlabel{cFourfolds}
There exist rational smooth projective fourfolds $Z$ for which $\mathrm{D}^b (Z)$ does not satisfy the Jordan-H\"older property.
\end{cor}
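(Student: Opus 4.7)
The plan is to promote the failure of Jordan--H\"older from $X$ to a rational fourfold $Z$ by arranging $\mathrm{D}^b(X)$ to appear as an admissible component of $\mathrm{D}^b(Z)$ whose complement is generated by a full exceptional collection. The natural candidate is the blow-up
\[
Z := \mathrm{Bl}_{\iota(X)} Y,
\]
where $Y$ is a smooth rational projective fourfold admitting a full exceptional collection $(E_1, \dots, E_\ell)$ and $\iota \colon X \hookrightarrow Y$ is a smooth codimension-two embedding. Good first candidates for $Y$ are $\IP^4$, a product such as $\IP^2 \times \IP^2$, a blow-up of $\IP^4$ at a configuration of points, or a suitable smooth toric fourfold.

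Once $(Y, \iota)$ has been fixed, Orlov's blow-up formula for codimension-two centres yields the semiorthogonal decomposition
\[
\mathrm{D}^b(Z) = \langle \mathrm{D}^b(Y), \mathrm{D}^b(X) \rangle,
\]
the second piece being the admissible image of a standard Fourier--Mukai embedding. Concatenating $(E_1, \dots, E_\ell)$ with each of the two maximal exceptional sequences on $X$ furnished by Theorem \ref{tMain} gives two exceptional sequences in $\mathrm{D}^b(Z)$ of lengths $\ell + 11$ and $\ell + 9$. To see that neither can be extended, suppose $G \in \mathrm{D}^b(Z)$ is exceptional and sits to the right of such a concatenation. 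Then $G$ is right-orthogonal to $\langle E_1, \dots, E_\ell \rangle = \mathrm{D}^b(Y)$ and hence lies in the $\mathrm{D}^b(X)$-component of the decomposition; transporting back through the Fourier--Mukai equivalence one would obtain an exceptional extension of the original sequence inside $\mathrm{D}^b(X)$, contradicting Theorem \ref{tMain}. Extensions on the left are ruled out by the mutation argument recorded immediately after Definition \ref{dExceptional}. Since $Z$ is rational, being birational to $Y$, this establishes the corollary.

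The main obstacle is the geometric step of producing a pair $(Y, \iota)$ with the required properties. A smooth embedding $X \hookrightarrow \IP^4$ of the classical Godeaux surface is not immediate, so one would instead embed $X$ via a sufficiently ample line bundle into some $\IP^N$ with $N \geq 5$ and then construct $Y$ as a smooth rational subvariety of codimension $N-4$ containing $X$ --- for example by intersecting generic hypersurfaces in the linear system $|\ki_X(d)|$ for $d \gg 0$ and invoking Bertini away from $X$, together with surjectivity onto normal directions along $X$ inherited from ampleness. Ensuring rationality of such a complete intersection is the delicate point and may force one to switch to an alternative model for $Y$, such as a projective bundle, a smooth toric fourfold, or an iterated blow-up of $\IP^4$ along cleverly chosen centres in which $X$ can be placed. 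Once any such $Y$ is exhibited, the derived-categorical portion of the argument is entirely formal.
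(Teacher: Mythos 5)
Your strategy is exactly the paper's: blow up a rational fourfold with a full exceptional collection along a codimension-two copy of $X$, apply Orlov's blow-up formula, concatenate, and rule out extensions (the paper rules them out by the same numerical computation as in Proposition \ref{pUnextendable}, while your categorical argument via the orthogonal to the $\mathrm{D}^b(Y)$-piece also works, granting the appropriate ordering of the two components). However, the one step you explicitly leave unresolved --- producing a concrete pair $(Y,\iota)$ with $Y$ rational, carrying a full exceptional collection, and containing $X$ in codimension two --- is precisely the nontrivial geometric content of the corollary, so as written the proof is incomplete. Moreover, your proposed fallback of intersecting generic high-degree hypersurfaces through $X \subset \IP^N$ cannot work: such a complete intersection fourfold has ample (or at best non-negative) canonical class for $d \gg 0$ and is certainly not rational, as you yourself suspect.

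The paper closes this gap with a generic projection trick. Embed $X$ into $\IP^5$ and project generically to $\IP^4$; the image $\bar{X}$ is singular only at finitely many improper double points, where two smooth branches meet transversally at a point. Blowing up $\IP^4$ at these finitely many points separates the branches, so the strict transform of $\bar{X}$ in $Y := \tilde{\IP}^4$ is a smooth surface isomorphic to $X$, embedded in codimension two. Since $\tilde{\IP}^4$ is a blow-up of $\IP^4$ at points, it is rational and inherits a full exceptional collection from Beilinson's collection on $\IP^4$ together with Orlov's formula for point blow-ups. Taking $Z = \mathrm{Bl}_X(\tilde{\IP}^4)$ then makes the rest of your argument go through. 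If you supply this (or any other explicit) construction of $(Y,\iota)$, your proof is complete.
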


\begin{proof}
We can embed the surface $X$ into $\IP^5$ and find a generic projection to $\IP^4$ such that the image $\bar{X} \subset \IP^4$ has improper double points as only singularities (which look like two planes meeting transversally in one point locally). Blowing up the double points, we have an embedding $X \subset \tilde{\mathbb{P}}^4$. Now consider the fourfold $Z = \mathrm{Bl}_{X} (\tilde{\mathbb{P}}^4)$. By a result of \cite{Orlov93} the derived category of $Z$ has a semiorthogonal decomposition into a copy of $\mathrm{D}^b (\tilde{\mathbb{P}}^4)$ and one copy of the blow-up center $\mathrm{D}^b (X)$. Now we can produce an exceptional sequence in $\mathrm{D}^b (Z)$ by concatenating a full exceptional sequence in the copy $\mathrm{D}^b (\tilde{\mathbb{P}}^4)$ with  the sequence $(\mathcal{M}_1, \dots , \mathcal{M}_{9})$ above in $\mathrm{D}^b (X)$. For the same numerical reasons as above, there is no exceptional object in the (left) orthogonal to this sequence. However, there are exceptional sequences of greater length in $\mathrm{D}^b (Z)$ (simply choose $(\mathcal{L}_1, \dots , \mathcal{L}_{11})$ also in $\mathrm{D}^b (X)$). This proves the Corollary. 
\end{proof}

The last Corollary shows that the failure of the Jordan-H\"older property is clearly not restricted to manifolds of general type, and it even fails for varieties which one has to control when trying to implement the approach to nonrationality of generic cubic fourfolds suggested in \cite{Kuz10}. Thus, as suggested at the end of Section \ref{sSemiorthogonal}, probably a substantial modification or, in any case, further ideas will be needed here.

\end{document}